\newtheorem{theorem}{Theorem}[section]
\newtheorem{lemma}[theorem]{Lemma}
\newtheorem{problem}[theorem]{Problem}
\newtheorem{proposition}[theorem]{Proposition}
\theoremstyle{definition}
\newtheorem{remark}[theorem]{Remark}
\newcommand{\nn}{\mathbb{N}}
\numberwithin{equation}{section}
\begin{document}

\title[$c_{0} \widehat{\otimes}_\pi c_{0}\widehat{\otimes}_\pi c_{0}$      is not isomorphic to a subspace of $c_{0} \widehat{\otimes}_\pi c_{0}$]{\large{Solution of the problem of Diestel on $3$-fold tensor product of $c_0$}}

\author{R. M. Causey}
%    Address of record for the research reported here
\address{Miami University, Department of Mathematics, Oxford, OH 45056, USA}
%    Current address
%\curraddr{Department of Mathematics and Statistics,
\email{causeyrm@miamioh.edu}

\author{E. M. Galego}
%    Address of record for the research reported here
\address{University of S\~ao Paulo, Department of Mathematics, IME, Rua do Mat\~ao 1010,  S\~ao Paulo, Brazil}
%    Current address
\curraddr{Department of Mathematics and Statistics,}
\email{eloi@ime.usp.br}

\author{C. Samuel}
\address{Aix Marseille Universit\'e, CNRS, Centrale Marseille, I2M, Marseille, France}
%\curraddr{Department of Mathematics and Statistics,}
\email{christian.samuel@univ-amu.fr}

%    \thanks will become a 1st page footnote.
%\thanks{The first author was supported in part by NSF Grant \#000000.}

%    General info
\subjclass[2010]{Primary 46B03; Secondary 46B28}
%\date{January 1, 1994 and, in revised form, June 22, 1994.}

\dedicatory{Dedicated to the memory of Professor Joe Diestel.}

\keywords{2-asymptotically uniformly smooth space,  $c_{0} \widehat{\otimes}_\pi c_{0}$ space, $3$-fold projective tensor product of $c_0$}

\begin{abstract} In the present paper we prove that the $3$-fold projective tensor product of $c_0$, $c_{0} \widehat{\otimes}_\pi c_{0}\widehat{\otimes}_\pi c_{0}$,  is not isomorphic to  a subspace of  $c_{0} \widehat{\otimes}_\pi c_{0}$. In particular, this settle the long-standing open problem  of  whether   $c_{0} \widehat{\otimes}_\pi c_{0}$   is isomorphic to    $c_{0} \widehat{\otimes}_\pi c_{0}\widehat{\otimes}_\pi c_{0}$. The origin of this problem goes back to Joe Diestel who mentioned it in a private communication to the authors of paper ``Unexpected subspaces of tensor products" published in 2006.

\end{abstract}

\maketitle

%-----------------------------------------------------------------------

\section{Introduction}

Since  Grothendieck  established the theory of tensor products \cite{Gr0},   it has been realized that the projective tensor products of Banach spaces  $X$ and $Y$ denoted by $X \widehat{\otimes}_\pi Y$   would have a great impact on the geometry of Banach spaces, but at the same time it would be complicated. In fact, today many surprising  results are known about it, 
even if the spaces involved are of simple geometric structure.

  Stehle  showed that there exists a subspace of $c_{0} \widehat{\otimes}_\pi c_{0}$  which fails to have 
the Dunford-Pettis property \cite{S}, despite being well known that every subspace of $c_{0}$  has this property \cite{Gr}, \cite{GG}. For some more unexpected facts about the geometric structure of  projective tensor product of Banach spaces,  see  for instance  \cite{CFPV}, \cite{E} and \cite{P}.

On the other hand, due to the difficulty in working with this structure of spaces, various elementary questions  on the spaces $X\widehat{\otimes}_\pi Y$ still remain unanswered. This is the case for the following problem attributed to Aleksander Pe\l czy\'nski \cite[p.517]{CFPV}. Does $c_0\widehat{\otimes}_\pi c_0$ have the uniform approximation property (UAP)? 

 Recall that a Banach $X$ has the UAP if there is a constant K and a function $f: \mathbb N \to \mathbb N$ such that, given $E \subset X$ with dim $E=k$ there is a linear continuous operator on X, with $\|T\| \leq K,$   dim $T(X) \leq f(k)$ and $T(x)=x$ for every $x\in E$ \cite{PC}. 

However, in \cite[Corollary 1.7]{CFPV} it was proved that the quadruple projective tensor product of $c_0$,  $c_0\widehat{\otimes}_\pi c_0 \widehat{\otimes}_\pi c_0\widehat{\otimes}_\pi c_0$,  does not have the UAP. Thus,  in view of Pe\l czy\'nski's problem, this last result also raised the following problem.
\begin{problem}\label{nat} Is $c_0\widehat{\otimes}_\pi c_0$ isomorphic to $c_0 \widehat{\otimes}_\pi c_0\widehat{\otimes}_\pi c_0 \widehat{\otimes}_\pi c_0?$ 
\end{problem}
  As already noted by the authors of \cite{CFPV}, by the associativity of the projective tensor product,  a positive solution to the next problem involving  the triple projective tensor projective product of $c_0$, would imply a positive solution to Problem \ref{nat}. 
\begin{problem} \label{NAT} Is $c_0\widehat{\otimes}_\pi c_0$ isomorphic to $c_0\widehat{\otimes}_\pi c_0 \widehat{\otimes}_\pi c_0?$ 
\end{problem}
 Of course, the geometric structure of $3$-fold projective  tensor products of Banach spaces is even more complicated and so far very little is known about the geometric properties of these spaces. In particular, Problem \ref{NAT} is another long time open question which is attributed to Joe Diestel \cite[p.517]{CFPV}. 

The initial motivation for studying the theme of this paper was to look for the solution of Problem \ref{NAT}. Although we have solved Problem \ref{NAT} negatively, our main result also resolves Problem \ref{nat} negatively. In fact, in Theorem \ref{mmm}  we  establish something stronger about the family of subspaces of   $c_0\widehat{\otimes}_\pi c_0$. This result is a contribution to better understand the fruitful work on projective  tensor products  started by Grothendieck in 1953  and still with many open problems related to it, see, e.g.,  \cite[Introduction]{Gi}.

\begin{theorem}\label{mmm} $c_0\widehat{\otimes}_\pi c_0\widehat{\otimes}_\pi c_0$ is not isomorphic to a subspace of $c_0\widehat{\otimes}_\pi c_0$.
\end{theorem}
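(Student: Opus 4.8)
The natural strategy is to find an isomorphic invariant that distinguishes $\trip$ from every subspace of $\dob$. The keyword list points to \emph{$2$-asymptotic uniform smoothness} (AUS), so the plan is: first, show that $\dob$ is $2$-AUS (with some constant), hence so is every subspace of it; second, show that $\trip$ is \emph{not} $2$-AUS, indeed not even isomorphic to any $2$-AUS space, by exhibiting a weakly null tree or a copy of a suitable finite-dimensional structure (e.g.\ the summing-basis-type configurations from $\ell_1^n$ sums) witnessing a failure of the Szlenk-type power estimate. Since AUS up to renorming is an isomorphic invariant quantified by the Szlenk index / the best exponent $p$ in the modulus $\overline\rho_X(t)\le Ct^p$, the two halves together give Theorem \ref{mmm}.

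For the first half, I would use the universal property and the known description of $\dob$ via the space of operators: $(\dob)^* = \mathcal L(c_0, \ell_1) = $ (boundedly) the space of weakly compact-type operators, and exploit that $c_0$ is $1$-AUS with the "best possible" modulus. The key computational step is a \emph{quantitative} lemma: if $X$ and $Y$ are AUS with power type $p$ and $q$ respectively, then $X\widehat\otimes_\pi Y$ is AUS of power type $\max(p,q)$ — or at least that $\dob$ specifically has modulus controlled by $t^2$. One route: represent elements of $\dob$ as absolutely convergent series $\sum_n \lambda_n u_n\otimes v_n$ and estimate $\|x + w\|$ from below for $w$ far out in a weakly null net, using that both coordinates "disappear at infinity" like in $c_0$; the square appears because one pays once in each factor. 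I expect this to be the main obstacle: controlling the projective norm from below under weakly null perturbations is exactly the kind of estimate that makes projective tensor products hard, and the bookkeeping of which "coordinate block" carries the mass in a sum $\sum \lambda_n u_n\otimes v_n$ is delicate.

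For the second half, I would show that $\trip$ contains, uniformly, structures incompatible with a $t^2$ asymptotic smoothness modulus. The heuristic: $c_0\widehat\otimes_\pi c_0\widehat\otimes_\pi c_0 = (\dob)\widehat\otimes_\pi c_0$, and tensoring a space of "power type $2$" with another $c_0$ should push the power type to $3$ (one pays in each of the three factors), so its Szlenk modulus cannot be dominated by $t^2$. Concretely I would locate in $\trip$ a weakly null tree of height $2$ (or a sequence of nested weakly null sequences indexed by $e_i\otimes e_j\otimes e_k$) along which every branch is $\ell_1^3$-like after normalization, forcing $\overline\rho(t)\gtrsim t^3$-type behavior, which contradicts being a subspace of a $2$-AUS space. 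Combining: a subspace of $\dob$ is $2$-AUS-renormable with a fixed exponent, $\trip$ is not, so $\trip\not\hookrightarrow\dob$, proving Theorem \ref{mmm}; the same argument applied to $(\trip)\widehat\otimes_\pi c_0$ vs.\ $\dob$ settles Problems \ref{nat} and \ref{NAT}.
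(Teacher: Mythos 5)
Your overall strategy --- separate $\trip$ from the subspaces of $\dob$ by a $2$-asymptotic-smoothness--type invariant, quantified by upper $\ell_2$ estimates on branches of weakly null trees --- is exactly the paper's strategy. But both halves of your sketch have genuine gaps, and the second one is fatal as written. For the first half, the quantitative lemma you propose (that $X\widehat{\otimes}_\pi Y$ inherits AUS power type $\max(p,q)$) is not a theorem you can expect to prove by the series-representation argument you describe, and it would not even produce the exponent $2$ from two copies of $c_0$. What the paper actually uses is a specific known result of Dilworth and Kutzarova: the natural FDD of $(\dob)^*$ satisfies a uniform $\ell_2$ lower estimate, whence by duality the FDD of $\dob$ satisfies an $\ell_2$ upper block estimate, and therefore every weakly null tree $(u_t)_{t\in\mathcal{A}_n}$ in $B_{\dob}$ has a branch with $\bigl\|\sum_{i=1}^n u_{(m_1,\ldots,m_i)}\bigr\|\leqslant Cn^{1/2}$. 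You should cite this rather than attempt to reprove it.

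The second half is where your argument breaks down logically. You propose to show $\overline{\rho}_{\trip}(t)\gtrsim t^3$ and claim this contradicts $2$-AUS; it does not, since $t^3\leq t^2$ for small $t$ --- power type $3$ is a \emph{stronger} smoothness condition than power type $2$, so ``pushing the power type to $3$'' would be no obstruction at all. To defeat the upper bound $Cn^{1/2}$ you need a weakly null tree of every height $n$ in $B_{\trip}$ \emph{all} of whose branches have norm $\geqslant \delta_n$ with $\delta_n/n^{1/2}\to\infty$; finite trees of bounded height and ``$\ell_1^3$-like'' configurations cannot do this. The paper produces such trees by taking $u_{(m_1,\ldots,m_i)}=e_{m_i}\otimes s_i\otimes f_i^n$ (summing basis in the second slot, a Rademacher system in the third), so that every branch is isometrically $\sum_{i=1}^n e_i\otimes s_i\otimes f_i^n$, and then norms this vector by an operator $P_n\in\mathcal{L}(\dob,\ell_1)\cong(\trip)^*$ built from a row-permuted Hilbert matrix, in the spirit of Kwapie\'n--Pe\l czy\'nski. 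The Hilbert matrix supplies the harmonic sums that give the extra factor $\log n$, yielding $\bigl\|\sum_{i=1}^n e_i\otimes s_i\otimes f_i^n\bigr\|\geqslant\Delta n^{1/2}\log n$; it is precisely this logarithm (absent from your sketch) that beats $Cn^{1/2}$ and completes the proof. Without a concrete norming functional producing a gain over $n^{1/2}$, your outline does not close.
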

In the next section, while providing some preliminaries for proving Theorem \ref{mmm}, we will also indicate the strategy for proving it.

\

Finally, observe that Theorem \ref{mmm} suggests some new questions. We only highlight one that is closely related to the subject of this work. Let $n \in \mathbb N$,  $n \geq 2$. As usual, we denote by  $\widehat{\otimes}_\pi^n c_0$ the n-fold projective tensor product of $c_0$. 
\begin{problem} \label{fff} Suppose that $\widehat{\otimes}_\pi^m c_0$ is isomorphic to $\widehat{\otimes}_\pi^n c_0$. Is it true that $m=n$?
\end{problem} 
We don't even know how to solve Problem \ref{fff} in the simplest case, i.e. $m = 3$.

\section{Preliminaries}
Our notation is standard as may be found  in \cite{Ry}. We just remember  that if  $X$ and $Y$ are  Banach spaces and 
$\mathcal{B}(X,Y)$  is the space of 
bounded bilinear functionals on $X\times Y $, then  the projective tensor norm of 
$u=\sum_{i=1}^{n} a_{i}\otimes b_{i}\in  X\otimes 
Y$ is defined by $$\Vert u \Vert =\sup\left\{\,\left\vert \sum_{i=1}^{n} 
\varphi (a_{i}, b_{i})\right \vert\,:\,\varphi \in \mathcal{B}(X,Y),\ 
\Vert \varphi  \Vert \leq 1 \, \right\}.$$

Thus, $X\widehat{\otimes}_{\pi }Y$ is the completion of $X\otimes 
Y$ with respect to this norm  \cite{Ry}. We denote by $\Vert \ \Vert _{X\widehat{\otimes}_{\pi }Y}$ the 
projective norm on $X\widehat{\otimes}_{\pi }Y.$

The idea behind our  proof  of Theorem \ref{mmm} is to argue that $c_0\hat{\otimes}_\pi c_0$ is $2$-asympto-tically uniformly smoothable (shown by Dilworth and Kutzarova in \cite[Theorem 9]{DK}), while $c_0\hat{\otimes}_\pi c_0 \hat{\otimes}_\pi c_0$ is not.   This amounts to exhibiting normalized, weakly null trees in $c_0\hat{\otimes}_\pi c_0 \hat{\otimes}_\pi c_0$ which do not admit uniform upper $\ell_2$ estimates on their branches, for which we will use the Hilbert matrices in a manner similar to Kwapien and  Pe\l czy\'{n}ski's use of the Hilbert matrices in \cite{KP}. One consequence of the results of \cite{KP} is that  $$\left \|\sum_{i=1}^n e_i\otimes s_i\right \|_{c_0\hat{\otimes}_\pi c_0} \geq \Omega \log n,$$ where $\Omega$ is a constant $> 0,$  $(e_i)_{i=1}^\infty$ is the unit vector basis of $c_0$   and $s_i=\sum_{j=1}^i e_j$ denotes the summing basis of $c_0$.  The proof proceeded by using the Hilbert matrix $h_n$ as a member of $(c_0\hat{\otimes}_\pi c_0)^*$ to norm $\sum_{i=1}^n e_i\otimes s_i$. More precisely, the Hilbert matrix $h_n$ was used to norm a tensor whose rows are a permutation of the rows of $\sum_{i=1}^n e_i\otimes s_i$. A crucial portion of that argument is the use of an appropriate upper estimate on the operator norm of $h_n$, when viewed as an operator from $c_0$ to $\ell_1$.  We will use a similar  upper estimate on the operator norm of $h_n$ from $c_0$ to $\ell_2$ and then use the Hilbert matrices (actually, row permutations of the Hilbert matrices) to norm $\sum_{i=1}^n e_i\otimes s_i\otimes f_i^n$ and provide the lower estimate 
$$\left \|\sum_{i=1}^n e_i\otimes s_i\otimes f_i^n\right \|_{c_0\hat{\otimes}_\pi c_0 \hat{\otimes}_\pi c_0} \geq  \Omega\;  n^{1/2}\log(n)$$  for a Rademacher system $(f_i^n)_{i=1}^n$ and a constant $\Omega$. Note that for each $n$, we are using a different Rademacher system $(f^n_i)_{i=1}^n$.   We then use this estimate to prove that $c_0\hat{\otimes}_\pi c_0 \hat{\otimes}_\pi c_0$ is not $2$-asymptotically uniformly smoothable, and is therefore not isomorphic to a subspace of  $c_0\hat{\otimes}_\pi c_0$.  However, we will deal with $2$-asymptotic uniform smoothness only implicitly, choosing to deal with weakly null trees instead.  We started defining this notion.

For $n\in\nn$, let $$\mathcal{A}_n=\{(m_i)_{i=1}^l: 1\leqslant l\leqslant n, m_1<\ldots<m_l, m_i\in\nn\}.$$  Given $t\in \{\varnothing\}\cup \bigcup_{l=1}^\infty \mathcal{A}_l$ and $m\in\nn$, we let $t<m$ denote the relation that either $t=\varnothing$ or $t=(m_1, \ldots, m_l)$ and $m_l<m$.   We let $\smallfrown$ denote concatenation, so that if $t=\{\varnothing\}\cup \mathcal{A}_{n-1}$ and $t<m\in\nn$, it follows that  $t\smallfrown(m)\in \mathcal{A}_n$.    

  Given a Banach space $X$, a family   $(u_t)_{t\in \mathcal{A}_n} $ of $X$  is said to be \emph{weakly null} if for any $t\in \{\varnothing\}\cup \mathcal{A}_{n-1}$, $(u_{t\smallfrown (m)})_{t<m}$ is a weakly null sequence in $X$.

For each $k\in\nn$, let $$E_k=\text{span}\{e_i\otimes e_j: \max\{i,j\}=k\}\subset c_0\hat{\otimes}_\pi c_0.$$  Then the sequence $(E_k)_{k=1}^\infty$ is a Schauder finite dimensional decomposition (FDD) for $c_0\hat{\otimes}_\pi c_0$. Moreover, since $(c_0\hat{\otimes}_\pi c_0)^*=\mathfrak{L}(c_0, \ell_1)=\mathfrak{K}(c_0, \ell_1)$, the space of  compact operators from $c_0$ to $\ell_1$, it follows that the sequence  $(E_k^*)_{k=1}^\infty$ given by $$E_k^*= \text{span}\{e_i^*\otimes e_j^*: \max\{i,j\}=k\}$$ is a FDD of   $(c_0\hat{\otimes}_\pi c_0)^*$. It was shown in \cite{DK} that  this  FDD $(E^*_k)_{k=1}^\infty$ satisfies a uniform $\ell_2$ lower estimate. That is, there exists $C_1$ such that for any $n\in\nn$,  any integers $0=r_0<r_1<\ldots <r_n$, and any $u_i\in \text{span}\{E^*_j: r_{i-1}<j\leqslant r_i\}$, $$C_1\Bigl\|\sum_{i=1}^n u_i\Bigr\|_{(c_0\hat{\otimes}_\pi c_0)^*}^2\geqslant \sum_{i=1}^n \|u_i\|^2_{(c_0\hat{\otimes}_\pi c_0)^*}.$$   By standard duality arguments, there exists a constant $C_2>0$ such that for any $k\in\nn$, any integers $0=r_0<r_1<\ldots <r_n$, and any $u_i\in \text{span}\{E_j: r_{i-1}<j\leqslant r_i\}$, $$\Bigl\|\sum_{i=1}^n u_i\Bigr\|_{c_0\hat{\otimes}_\pi c_0}^2 \leqslant C_2 \sum_{i=1}^n \|u_i\|_{c_0\hat{\otimes}_\pi c_0}^2.$$  Therefore for any $n\in\nn$, any $C_3>C_2$, and any weakly null family  $(u_t)_{t\in \mathcal{A}_n}$  of  $ B_{c_0\hat{\otimes}_\pi c_0}$, there exists $(m_1, \ldots, m_n)\in \mathcal{A}_n$ such that $$\Bigl\|\sum_{i=1}^n u_{(m_1, \ldots, m_i)}\Bigr\|_{c_0\hat{\otimes}_\pi c_0} \leqslant C_3 n^{1/2}.$$  We isolate this result in the following proposition.

\begin{proposition} There exists a constant $C$ such that for any $n\in\nn$ and any weakly null family $(u_t)_{t\in \mathcal{A}_n}$ of $ B_{c_0\hat{\otimes}_\pi c_0}$, there exists $(m_1, \ldots, m_n)\in \mathcal{A}_n$ such that $$\Bigl\|\sum_{i=1}^n u_{(m_1, \ldots, m_i)}\Bigr\|_{c_0\hat{\otimes}_\pi c_0} \leqslant C n^{1/2}.$$   

\label{DK}
\end{proposition}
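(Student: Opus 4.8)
The plan is to extract from the given weakly null tree a single branch which, after a small perturbation, becomes a block sequence with respect to the Schauder FDD $(E_k)_{k=1}^\infty$ of $\dob$, and then apply to that block sequence the uniform $\ell_2$ upper estimate (with constant $C_2$) recorded just above. For an interval $I\subset\nn$ write $P_I$ for the canonical projection of $\dob$ onto $\text{span}\{E_j:j\in I\}$ associated with the FDD; these projections are uniformly bounded, each $P_{[1,r]}$ has finite rank, and $\|P_{(r,\infty)}x\|=\|x-P_{[1,r]}x\|\to 0$ as $r\to\infty$ for every fixed $x\in\dob$.

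Fix tolerances $\ee_i=2^{-i}$ and let $(u_t)_{t\in\mathcal A_n}$ be a weakly null family with $u_t\in B_{\dob}$. I would choose $m_1<\dots<m_n$ in $\nn$ and integers $0=r_0<r_1<\dots<r_n$ recursively. Having fixed $m_1<\dots<m_{i-1}$ and $r_{i-1}$, set $t=(m_1,\dots,m_{i-1})$ (so $t=\varnothing$, $m_0=0$ and $P_{[1,r_0]}=0$ when $i=1$). Since $t\in\{\varnothing\}\cup\mathcal A_{n-1}$, the sequence $(u_{t\smallfrown(m)})_{t<m}$ is weakly null; as $P_{[1,r_{i-1}]}$ is weak-to-norm continuous on bounded sets (being finite rank), $\|P_{[1,r_{i-1}]}u_{t\smallfrown(m)}\|\to 0$ as $m\to\infty$, so I pick $m_i>m_{i-1}$ with $\|P_{[1,r_{i-1}]}u_{t\smallfrown(m_i)}\|<\ee_i/2$; then I pick $r_i>r_{i-1}$ with $\|P_{(r_i,\infty)}u_{t\smallfrown(m_i)}\|<\ee_i/2$. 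Writing $v_i=u_{(m_1,\dots,m_i)}$ and $w_i=P_{(r_{i-1},r_i]}v_i$, the identity $I-P_{(r_{i-1},r_i]}=P_{[1,r_{i-1}]}+P_{(r_i,\infty)}$ gives $\|v_i-w_i\|<\ee_i$; in particular $\|w_i\|\le 1+\ee_i\le 2$, and $(w_i)_{i=1}^n$ is a block sequence with respect to $(E_k)$, with $w_i\in\text{span}\{E_j:r_{i-1}<j\le r_i\}$.

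It then remains to estimate: splitting $\sum_{i=1}^n v_i=\sum_{i=1}^n w_i+\sum_{i=1}^n(v_i-w_i)$, the block upper estimate yields $\bigl\|\sum_{i=1}^n w_i\bigr\|\le\sqrt{C_2}\bigl(\sum_{i=1}^n\|w_i\|^2\bigr)^{1/2}\le 2\sqrt{C_2}\,n^{1/2}$, while $\bigl\|\sum_{i=1}^n(v_i-w_i)\bigr\|\le\sum_{i=1}^n\ee_i<1\le n^{1/2}$. Hence $\bigl\|\sum_{i=1}^n u_{(m_1,\dots,m_i)}\bigr\|\le(2\sqrt{C_2}+1)\,n^{1/2}$, so $C=2\sqrt{C_2}+1$ works. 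There is no genuine obstacle: the only point needing care is the bookkeeping with the uniformly bounded FDD projection constant (which at worst rescales the $\ee_i$ and the final $C$), and the substantive input --- the $\ell_2$ lower estimate for the dual FDD $(E_k^*)_{k=1}^\infty$, hence the block $\ell_2$ upper estimate for $(E_k)_{k=1}^\infty$ --- is precisely the Dilworth--Kutzarova result invoked above; the rest is the standard gliding-hump pruning of a weakly null tree.
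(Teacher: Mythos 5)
Your proof is correct and follows essentially the same route as the paper: the paper derives Proposition \ref{DK} from the Dilworth--Kutzarova $\ell_2$ lower estimate for $(E_k^*)$, dualized to a block upper $\ell_2$ estimate for $(E_k)$, and then asserts the conclusion for weakly null trees, leaving implicit exactly the gliding-hump branch selection you carry out. Your write-up simply supplies those standard details (and your explicit constant $2\sqrt{C_2}+1$ is consistent with the intended bound).
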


We also note that the isolated property states in Proposition \ref{DK} is strictly weaker than $2$-asymptotic uniform smoothability. Moreover, we will ultimately show that $c_0\hat{\otimes}_\pi c_0\hat{\otimes}_\pi c_0$ lacks this weaker property (Proposition \ref{main}), which means we will prove something  stronger than the fact that $c_0\hat{\otimes}_\pi c_0\hat{\otimes}_\pi c_0$ is not $2$-asymptotically uniformly smoothable. In particular, joining Propositions \ref{DK} and \ref{main} it immediately follows that  $c_0\widehat{\otimes}_\pi c_0\widehat{\otimes}_\pi c_0$ is not isomorphic to a subspace of $c_0\widehat{\otimes}_\pi c_0$, so we will have proved Theorem \ref{mmm}.

\section{ The Hilbert matrices $h_n$ and  the Rademarcher system $(f_i^n)_{i=1}^n$}
In this section first  we define the \emph{Hilbert matrices} $h_n$ and some permuted versions thereof, which we denote by $p_n$. Throughout, our matrices will be identified with the operators they induce via matrix multiplication. We will denote the row $i$, column $j$ entry of a matrix $M$ by $M(i,j)$.   We define

\begin{displaymath}
   h_n(i,j) = \left\{
     \begin{array}{ll}
       \frac{1}{n+1-i-j} & : i,j\leqslant n \text{\ and\ } i+j\neq n+1\\
       0 & : \text{otherwise}
     \end{array}
   \right.
\end{displaymath} 

and 

\begin{displaymath}
   p_n(i,j) = \left\{
     \begin{array}{ll}
       \frac{1}{i-j} & : i,j\leqslant n\text{\ and\ }i\neq j\\
       0 & :\text{otherwise}.
     \end{array}
   \right.
\end{displaymath} 

\begin{remark}\label{rrr} Notice that for any $j\in\nn$, $p_n(i,j)=h_n(n+1-i,j)$ for $1\leqslant i\leqslant n$ and $p_n(i,j)=h_n(i,j)$ for all $i>n$. Therefore $\|h_n:\ell_2\to \ell_2\|=\|p_n:\ell_2\to \ell_2\|$ for all $n\in\nn$.    As noted in \cite[Inequality $1.7$]{KP}, and there attributed to Titchmarch \cite{T}, there exists a constant $\tau=\tau(2)$ such that for all $n\in\nn$, $\|h_n:\ell_2\to \ell_2\|\leqslant \tau$. Since the rows of $p_n$ are simply the rows of $h_n$ permuted, $\|p_n:\ell_2\to \ell_2\|\leqslant \tau$ for al $n\in\nn$.   Since the map $I_n:c_0\to \ell_2$ given by $I_n\sum_{i=1}^\infty a_ie_i=\sum_{i=1}^n a_ie_i$ has norm $n^{1/2}$, and since $p_n:c_0\to \ell_2$ is equal to the composition $p_nI_n:c_0\to \ell_2$,  we have
\begin{equation}\label{norm}
\|p_n:c_0\to \ell_2\|\leqslant \tau n^{1/2} \text{ for all } n\in\nn. 
\end{equation}
\end{remark}

Next we need to remember the definitions of the Haar and the Rademacher systems introduced by A. Pe\l czy\' nki and Singer \cite{PS} in a $2^n$-dimensional space with respect to a symmetric basis $(x_i)_{1\leq i\leq 2^n}.$ The Haar system $(y_i)_{1\leq i\leq 2^n}$ is the sequence defined by 
$$y_1=\sum_{i=1}^{2^n}x_1,\  y_{2^k+l}=\sum_{i=1}^{2^n} \beta_i^{k,l}x_i,\ (l=1,\dots,2^k\; ;\; k=0,\dots, n-1)
$$
where
$$
\beta_i^{k,l}=\begin{cases}
\phantom{-}1& \text{ for } (2l-2)2^{n-k-1}+1 \leq i \leq (2l-1)2^{n-k-1}\\
-1 & \text{ for } (2l-1)2^{n-k-1}+1 \leq i \leq 2l\; 2^{n-k-1}\\
\phantom{-} 0& \text{ for } 1\leq i\leq(2l-2)2^{n-k-1} \text{ and }  2l\; 2^{n-k-1}+1\leq i\leq 2^n
\end{cases}
$$
We shall call Rademacher system the sequence $(r_k)_{1\leq k\leq n}$ defined by
$$r_k=\sum_{l=1}^{2^{k-1}}y_{2^{k-1}+l}
$$
We denote $(f_i^n)_{1\leq i\leq n}$ the Rademacher system associated to the unit basis $\ell_\infty^{2^n}$ and 
$(g_i^n)_{1\leq i\leq n}$ the normalized Rademacher system associated to the unit basis $\ell_1^{2^n}.$ In the duality $\langle \ell_1^{2^n},\ell_\infty^{2^n}\rangle$ we have $g_i^n(f_i^n)=1.$

\begin{lemma}\label{ue}
For any scalars   $(a_i)_{1\leq i\leq n}$ we have
$$\left\Vert \sum_{i=1}^n a_ig_i^n\right\Vert\leq \left(\sum_{i=1}^n \vert a_i\vert^2\right)^\frac{1}{2}
$$\end{lemma}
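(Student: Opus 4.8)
The plan is to recognize $(g_i^n)_{i=1}^n$ as a sequence whose behavior in $\ell_1^{2^n}$ mimics that of the Rademacher functions in $L_1[0,1]$, and then to exploit the combinatorial/probabilistic structure of the Haar construction directly. Concretely, I would first unwind the definitions: the Rademacher vector $r_k = \sum_{l=1}^{2^{k-1}} y_{2^{k-1}+l}$ built from the Haar vectors $y_i$ has, coordinatewise, entries $\pm 1$ on the unit basis of $\ell_\infty^{2^n}$; dualizing, the normalized version $g_i^n$ in $\ell_1^{2^n}$ has each coordinate equal to $\pm 2^{-n}$. The key structural fact is that the map sending the standard basis vector $x_t$ of $\ell_1^{2^n}$ (indexed by $t \in \{1,\dots,2^n\}$, which we identify with a sign pattern) to the corresponding point of the discrete cube makes $(g_i^n)$ into (a scaled copy of) the coordinate functions on $\{-1,1\}^n$ with normalized counting measure. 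Thus $\sum_{i=1}^n a_i g_i^n$ is, up to the identification, the function $\omega \mapsto \sum_{i=1}^n a_i \omega_i$ on the cube, and its $\ell_1^{2^n}$-norm is exactly its $L_1$-norm against the uniform probability measure, i.e. the average of $|\sum_i a_i \omega_i|$ over all sign choices.

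Once this identification is in place, the inequality $\|\sum_i a_i g_i^n\| \le (\sum_i |a_i|^2)^{1/2}$ is precisely the statement that the $L_1$ norm of a Rademacher sum is at most its $L_2$ norm, which is immediate from the Cauchy–Schwarz / Jensen inequality $\mathbb{E}|f| \le (\mathbb{E}|f|^2)^{1/2}$ together with orthonormality of the Rademacher functions, giving $\mathbb{E}\big|\sum_i a_i \omega_i\big|^2 = \sum_i |a_i|^2$. So the real content is bookkeeping: verifying that the Haar-then-Rademacher recipe of Pełczyński–Singer, applied to the symmetric basis of $\ell_1^{2^n}$ and then normalized, genuinely reproduces the signs $\varepsilon_i(t) = \pm 1$ of the cube's coordinate functions (equivalently, that the supports and signs of the $\beta_i^{k,l}$ telescope correctly when summed over $l$ in the definition of $r_k$). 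I would do this by induction on $n$, or by directly computing that the $i$-th Rademacher vector is constant $+1$ on a dyadic block of length $2^{n-i}$, then constant $-1$ on the next such block, and so on — which is exactly the $i$-th Rademacher function on a dyadic discretization of $[0,1]$, hence a coordinate function on $\{-1,1\}^n$ after relabeling.

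The main obstacle I anticipate is not any hard analysis but the care needed in the combinatorial identification — making sure the normalization constant is $2^{-n}$ (so that $g_i^n(f_i^n) = 1$ as asserted, since $f_i^n$ has $\pm 1$ entries and $g_i^n$ has $\pm 2^{-n}$ entries with matching signs, and there are $2^n$ coordinates), and that summing the Haar vectors $y_{2^{k-1}+l}$ over $l = 1, \dots, 2^{k-1}$ does yield a $\{-1,+1\}$-valued vector rather than something with gaps or larger entries. Granting that, the estimate follows from $\|\sum_i a_i g_i^n\|_{\ell_1^{2^n}} = \mathbb{E}_\omega\big|\sum_i a_i\omega_i\big| \le \big(\mathbb{E}_\omega\big|\sum_i a_i\omega_i\big|^2\big)^{1/2} = \big(\sum_i |a_i|^2\big)^{1/2}$, where both expectations are over $\omega$ uniform on $\{-1,1\}^n$, completing the proof.
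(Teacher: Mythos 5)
Your proposal is correct and follows essentially the same route as the paper: the paper invokes claim (10) of Pe\l czy\'nski--Singer to identify $\bigl\|\sum_i a_i g_i^n\bigr\|$ with the $L_1$-norm of the Rademacher sum $\sum_i a_i r_i$ and then applies H\"older's inequality together with orthonormality, which is exactly your Cauchy--Schwarz argument on the discrete cube. The only difference is that you verify the cube identification directly from the Haar construction rather than citing it, which is fine but adds no new idea.
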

\begin{proof} Let $(r_k)_{1\leq k}$ be the sequence of the usual Rademacher system. It follows from the claim   (10) of \cite{PS} and H\"{o}lder's inequality that
$$\left\Vert \sum_{i=1}^n a_ig_i^n\right\Vert=\int_{[0,1]}\left \vert \sum_{i=1}^n a_ir_i(t)\right \vert dt \leq \left(\int_{[0,1]}\left \vert \sum_{i=1}^n a_ir_i(t)\right  \vert^2dt\right)^\frac{1}{2}  .
$$
The usual Rademacher system is   orthonormal so   the right hand inequality follows. 
\end{proof}

The following lemma  will play a key role in section 4.

\begin{lemma}	\label{chump1} For every integer $n$ there exists a unique bounded linear operator $P_n: c_o\widehat{\otimes}_\pi c_0\to \ell_1$ such that $P_n(e_i\otimes e_j) =p_n(i,j)g_i^n.$ Moreover $\Vert P_n\Vert\leq\tau n^\frac{1}{2}.$ 
\end{lemma}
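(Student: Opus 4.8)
The plan is to realize $P_n$ as the composition of two maps whose norms we already control. The operator $p_n\colon c_0\to\ell_2$ has norm at most $\tau n^{1/2}$ by \eqref{norm}, and by Lemma \ref{ue} the map $J_n\colon \ell_2^n\to\ell_1$ sending the standard basis vector $\delta_i$ to $g_i^n$ satisfies $\|J_n\|\le 1$ (that lemma says precisely $\|\sum a_i g_i^n\|_{\ell_1}\le(\sum|a_i|^2)^{1/2}$). So first I would define the bilinear map $\phi_n\colon c_0\times c_0\to\ell_1$ by $\phi_n(x,y)=J_n\bigl(\,[p_n x]_{\le n}\,\bigr)\cdot\langle$-th coordinate weighting by $y\rangle$; more carefully, set
\[
\phi_n(x,y)=\sum_{i=1}^n \bigl(p_n x\bigr)(i)\, y(i)\, g_i^n,
\]
where $(p_n x)(i)=\sum_{j} p_n(i,j)x(j)$. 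One checks $\phi_n$ is bilinear and $\phi_n(e_i\otimes e_j)=p_n(i,j)g_i^n$, which forces uniqueness of any linear $P_n$ on the dense subspace $c_0\otimes c_0$, hence on $c_0\widehat{\otimes}_\pi c_0$ by density.

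The heart of the matter is the norm bound, i.e. showing $\phi_n$ is bounded as a bilinear form with $\|\phi_n\|\le\tau n^{1/2}$, since then the universal property of the projective tensor norm gives a linear extension $P_n$ with the same norm. For fixed $x,y$ in the unit balls of $c_0$, write $a_i=(p_n x)(i)\,y(i)$ for $1\le i\le n$. Then $\|\phi_n(x,y)\|_{\ell_1}=\|\sum_{i=1}^n a_i g_i^n\|_{\ell_1}\le(\sum_{i=1}^n|a_i|^2)^{1/2}$ by Lemma \ref{ue}. Since $\|y\|_{c_0}\le 1$ we have $|a_i|\le|(p_n x)(i)|$, so $(\sum_{i=1}^n|a_i|^2)^{1/2}\le\|p_n x\|_{\ell_2}\le\|p_n\colon c_0\to\ell_2\|\le\tau n^{1/2}$ by \eqref{norm}. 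Taking the supremum over $x,y$ in the unit balls yields $\|\phi_n\|\le\tau n^{1/2}$.

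The only genuinely delicate point is bookkeeping: making sure that the bilinear form associated to $P_n$ really is $\phi_n$ and that evaluating on the basis tensors recovers $p_n(i,j)g_i^n$ — this is where one must be careful that $p_n$ truncates at the $n$-th coordinate (its rows and columns vanish past $n$), so the finite sum over $i\le n$ is exactly the image, with no tail contributions. Uniqueness is immediate once existence is established, because $\{e_i\otimes e_j\}$ spans a dense subspace. So I expect no real obstacle; the argument is essentially "factor $P_n$ through $\ell_2^n$ via $p_n$, then through $\ell_1$ via the normalized Rademacher map $J_n$, and multiply the norm bounds $\tau n^{1/2}$ and $1$."
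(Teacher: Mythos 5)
Your approach is the same as the paper's: bound $\bigl\|\sum_i a_i g_i^n\bigr\|_{\ell_1}\le\bigl(\sum_i|a_i|^2\bigr)^{1/2}$ via Lemma \ref{ue}, feed in \eqref{norm}, and pass from the bounded bilinear map to $P_n$ by the universal property of the projective norm. There is, however, one concrete slip exactly at the "bookkeeping" point you flagged as delicate: with your definition
\[
\phi_n(x,y)=\sum_{i=1}^n (p_n x)(i)\, y(i)\, g_i^n,\qquad (p_n x)(i)=\sum_j p_n(i,j)x(j),
\]
one computes $\phi_n(e_k,e_l)=p_n(l,k)\,g_l^n$, not $p_n(k,l)\,g_k^n$; since $p_n(l,k)=-p_n(k,l)$ and $g_l^n\neq g_k^n$, the claimed identity on basis tensors fails as written. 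The fix is to apply $p_n$ to the \emph{second} variable and weight by the coordinates of the first, i.e.\ $\phi_n(x,y)=\sum_{i=1}^n x(i)\,(p_n y)(i)\,g_i^n$, which is precisely the form the paper uses; your norm estimate then goes through verbatim with $a_i=x(i)(p_ny)(i)$, $|a_i|\le|(p_ny)(i)|$, and $\|p_ny\|_{\ell_2}\le\tau n^{1/2}$. With that transposition corrected, your argument coincides with the paper's proof.
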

\begin{proof}
It is obvious that there exists a bilinear map $b_n$ from $  c_0\times c_0\to \ell_1$ such that $b_n(e_i,e_j)=p_n(i,j)g_i^n.$ We shall show that $b_n$ is bounded. Let $x=\sum_{i=1}^\infty a_i e_i $ and $y=\sum_{j=1}^\infty b_j e_j $ be two elements of $B_{c_0}.$ Then
$$b_n(x,y)=\sum_{i=1}^n\sum_{j=1}^na_ib_jp_n(i,j)g_i^n=
\sum_{i=1}^n a_i\left(\sum_{j=1}^nb_jp_n(i,j)\right)g_i^n.
$$
By Lemma \ref{ue} and   (\ref{norm}) it follows that
$$ \Vert b_n(x,y)\Vert\leq \left(\sum_{i=1}^n\left \vert a_i \sum_{j=1}^nb_jp_n(i,j)\right \vert^2\right)^\frac{1}{2}\leq 
\left(\sum_{i=1}^n\left \vert \sum_{j=1}^nb_jp_n(i,j)\right \vert^2\right)^\frac{1}{2}\leq\tau n^\frac{1}{2}.
$$

\end{proof}

\section{On the geometric structure of $c_0\hat{\otimes}_\pi c_0\hat{\otimes}_\pi c_0$} 

The objective of this  last section is to prove Proposition \ref{main}. It contains the fact that $c_0\hat{\otimes}_\pi c_0\hat{\otimes}_\pi c_0$ lacks the previously isolated property states in Proposition \ref{DK}.
\begin{lemma}\label{main2} There exists a constant $\Delta>0$ such that for any $n\in\nn$, $$\Bigl\|\sum_{i=1}^n e_i\otimes s_i\otimes f_i^n\Bigr\|_{\widehat{\otimes}_\pi^3 c_0} \geqslant \Delta n^{1/2}\log(n).$$  
\end{lemma}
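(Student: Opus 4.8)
The plan is to use the permuted Hilbert operators $P_n$ from Lemma~\ref{chump1} to norm the element $\sum_{i=1}^n e_i\otimes s_i\otimes f_i^n$ in $\widehat{\otimes}_\pi^3 c_0$. Recall that the dual of $\widehat{\otimes}_\pi^3 c_0$ is the space of bounded trilinear forms on $c_0\times c_0\times c_0$, and equivalently $(c_0\widehat{\otimes}_\pi c_0)\widehat{\otimes}_\pi c_0$ has dual $\mathfrak{L}(c_0\widehat{\otimes}_\pi c_0, \ell_1)$ (or $\mathfrak{L}(c_0, (c_0\widehat{\otimes}_\pi c_0)^*)$). The idea is to build a functional of controlled norm out of $P_n\colon c_0\widehat{\otimes}_\pi c_0\to\ell_1$ paired against the third variable via the Rademacher system: define $\Phi_n$ on $\widehat{\otimes}_\pi^3 c_0$ by pairing the first two coordinates through $P_n$ and the third coordinate through the functionals dual to $f_i^n$, i.e. informally $\Phi_n(x\otimes y\otimes z) = \langle P_n(x\otimes y), z\rangle$ where we regard $P_n(x\otimes y)\in\ell_1$ as acting coordinatewise on $z\in c_0$ via $g_i^n(f_i^n)=1$. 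Since $\|P_n\|\leqslant \tau n^{1/2}$ and the identification $\ell_1 = (c_0)^*$ is isometric, $\Phi_n$ is a bounded functional on $\widehat{\otimes}_\pi^3 c_0$ with $\|\Phi_n\|\leqslant \tau n^{1/2}$.

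Next I would evaluate $\Phi_n$ on $w_n := \sum_{i=1}^n e_i\otimes s_i\otimes f_i^n$. Using $s_i = \sum_{j=1}^i e_j$ and linearity, and using $g_k^n(f_i^n)=\delta_{ik}$ (from the duality $\langle\ell_1^{2^n},\ell_\infty^{2^n}\rangle$ with $g_i^n(f_i^n)=1$, together with the fact that the normalized Rademacher system in $\ell_1$ is biorthogonal to the Rademacher system in $\ell_\infty$), one computes that $\Phi_n(w_n) = \sum_{i=1}^n \sum_{j=1}^i p_n(i,j)$, which is the sum of the entries of $p_n$ on and below the diagonal block $\{(i,j): j\leqslant i\leqslant n\}$. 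Since $p_n(i,j) = 1/(i-j)$ for $i>j$ and $p_n(i,i)=0$, this equals $\sum_{i=1}^n\sum_{j<i} \frac{1}{i-j} = \sum_{i=1}^n\sum_{d=1}^{i-1}\frac1d = \sum_{i=1}^n H_{i-1}$, where $H_m$ is the $m$-th harmonic number; this grows like $n\log n$. More precisely $\sum_{i=1}^n H_{i-1}\geqslant c\, n\log n$ for an absolute constant $c>0$ and all $n\geqslant 2$. Therefore
$$
\tau n^{1/2}\,\Bigl\|w_n\Bigr\|_{\widehat{\otimes}_\pi^3 c_0} \geqslant \|\Phi_n\|\cdot\|w_n\| \geqslant |\Phi_n(w_n)| \geqslant c\, n\log n,
$$
which on rearranging gives $\|w_n\|_{\widehat{\otimes}_\pi^3 c_0}\geqslant (c/\tau)\, n^{1/2}\log n$, and we set $\Delta = c/\tau$.

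The main obstacle, and the step requiring care, is the rigorous construction of the functional $\Phi_n$ and the verification of its norm bound: one must check that pairing through $P_n$ in the first two variables and through the Rademacher duality in the third genuinely yields a bounded trilinear form on $c_0\times c_0\times c_0$ (equivalently a functional on the completed triple projective tensor product) with norm at most $\tau n^{1/2}$, rather than merely a form on the algebraic tensor product. The cleanest route is to factor: view $\Phi_n$ as the composition of $\mathrm{id}\,\widehat{\otimes}_\pi P_n^{\flat}$-type maps, or more directly define the trilinear form $B_n(x,y,z)=\sum_{i=1}^n \bigl(\sum_{j=1}^n a_i^{(x)} b_j^{(y)} p_n(i,j)\bigr)\, f_i^{n,*}(z)$ where $f_i^{n,*}$ is the $i$-th coordinate functional appearing in the expansion against the Rademacher vectors, and bound $|B_n(x,y,z)|$ for $x,y\in B_{c_0}$, $z\in B_{c_0}$ by combining the Cauchy–Schwarz/Hölder estimate already used in the proof of Lemma~\ref{chump1} (which gives $\ell_2$-control of the vector $\bigl(\sum_j b_j p_n(i,j)\bigr)_i$ with norm $\leqslant \tau n^{1/2}$) with the fact that $\|\sum_i c_i g_i^n\|_{\ell_1^{2^n}}\geqslant$ (something controlling the pairing with $z$) — in fact the dual estimate to Lemma~\ref{ue}. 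A secondary minor point is pinning down the absolute constant in $\sum_{i=1}^n H_{i-1}\geqslant c\,n\log n$, which is an elementary estimate on harmonic sums and poses no real difficulty.
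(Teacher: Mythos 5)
Your proposal is correct and follows essentially the same route as the paper: the paper takes $\Phi_n$ to be $P_n$ itself under the isometric identification $(\widehat{\otimes}_\pi^3 c_0)^*\cong\mathcal{L}(c_0\widehat{\otimes}_\pi c_0,\ell_1)$, which immediately gives $\|\Phi_n\|\leqslant\tau n^{1/2}$ and disposes of the ``main obstacle'' you flag, and then performs the same harmonic-sum computation $\sum_{i}\sum_{j\leqslant i}p_n(i,j)=\sum_{k=1}^{n-1}\sum_{l=1}^{k}\frac{1}{l}\geqslant\frac{1}{2}n\log n$. (Note also that no biorthogonality $g_k^n(f_i^n)=\delta_{ik}$ is needed, since $P_n(e_i\otimes s_i)$ is already a multiple of $g_i^n$ alone.)
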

\begin{proof}

We recall that the spaces $(c_0\hat{\otimes}_\pi c_0\hat{\otimes}_\pi c_0)^*$ and    $\mathcal{L}(c_0\hat{\otimes}_\pi c_0,\ell_1)$ are isometrically isomorphic so, for every integer $n,$
\begin{align*}
 \tau n^\frac{1}{2}\left \Vert \sum_{i=1}^n e_i\otimes s_i \otimes f_i^n\right \Vert_{{\widehat{\otimes}_\pi}^3c_0}&\geq   \left \vert \sum_{i=1}^n P_n(e_i\otimes s_i)(f_i^n)\right \vert=  \left \vert \sum_{i=1}^n\sum_{j=1}^i p_n(i,j)g_i^n(f_i^n)\right \vert\\
\noalign{\text{and letting $k=i-1, l=i-j,$} by an elementary computation we have}
&\geq \sum_{k=1}^{n-1}\sum_{l=1}^k \frac{1}{l}\geq \frac{1}{2}n \log n.
\end{align*} 
We conclude by letting $\Delta=2\tau.$
\end{proof}

\begin{proposition} There exists a constant $\Delta>0$ such that for all $n\in\nn$, there exists a weakly null tree $(u_t)_{t\in \mathcal{A}_n}$ of $ B_{\widehat{\otimes}_\pi^3 c_0}$ such that, for every $(m_1, \ldots, m_n)\in \mathcal{A}_n$, $$\Bigl\|\sum_{i=1}^n u_{(m_1, \ldots, m_i)}\Bigr\|_{\widehat{\otimes}_\pi^3 c_0} \geqslant \Delta n^{1/2}\log (n).$$

\label{main}
\end{proposition}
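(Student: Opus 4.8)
The goal is to upgrade Lemma \ref{main2}, which produces a single large vector $\sum_{i=1}^n e_i\otimes s_i\otimes f_i^n$, into a weakly null tree all of whose branches are large. The obstacle is that $s_i$, being the summing basis, is \emph{not} weakly null, and the fixed vectors $f_i^n$ do not obviously form a tree structure; so the raw vectors $e_i\otimes s_i\otimes f_i^n$ cannot be the tree nodes. The standard device is to replace the fixed index $i$ by a \emph{running} index $m$ along a branch and to exploit the fact that $(e_m)_{m}$ is weakly null in $c_0$ while $(s_m - s_{m'})$ for $m>m'$ is a "tail" that stays comparable to $s$ under the relevant functionals.

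First I would fix $n$ and, for a node $t = (m_1,\dots,m_{i-1}) \in \{\varnothing\}\cup\mathcal A_{n-1}$ with $t<m$, define the tree node $u_{t\smallfrown(m)}$ to be (a normalization of) $e_m \otimes (s_m - s_{m_{i-1}}) \otimes f_i^n$ — that is, the third coordinate depends only on the \emph{level} $i$ of the node, while the first two coordinates use the actual label $m$ and the "increment" of the summing basis since the previous label. Weak nullity in each coordinate is then immediate: for fixed $t$ of level $i-1$, as $m\to\infty$ the sequence $e_m\to 0$ weakly in $c_0$, $s_m - s_{m_{i-1}} \to$ (weakly) a fixed shift which is harmless because it is tensored against $e_m\to 0$, and $f_i^n$ is constant; since the projective tensor norm is continuous and the first factor is weakly null, the product is weakly null (this is the routine point about weakly null sequences in projective tensor products: $x_m\otimes y_m \to 0$ weakly when $x_m\to 0$ weakly and $(y_m)$ is bounded, provided one also checks against functionals, which here are operators $c_0\to \mathcal L(c_0,c_0)$). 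The normalization constant is absorbed into $\Delta$ at the end, and boundedness of the nodes in $B_{\widehat\otimes_\pi^3 c_0}$ holds because $\|e_m\otimes(s_m-s_{m_{i-1}})\otimes f_i^n\| \le \|s_m - s_{m_{i-1}}\| \le$ a uniform constant (the summing basis increments have norm $1$ in $c_0$, as does $f_i^n$).

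Next, given any branch $(m_1,\dots,m_n)\in\mathcal A_n$, I would estimate $\|\sum_{i=1}^n u_{(m_1,\dots,m_i)}\|$ from below by pairing against the same functional machinery used in Lemma \ref{main2}, but now built from the \emph{row-permuted Hilbert matrix indexed by the labels $m_1<\dots<m_n$} rather than by $1,\dots,n$. Concretely, for the branch $\vec m = (m_1,\dots,m_n)$, let $N = m_n$ and use the operator $P_N \in \mathcal L(c_0\widehat\otimes_\pi c_0,\ell_1)$ from Lemma \ref{chump1}; then $\|P_N\|\le \tau N^{1/2}$, and
\[
\tau N^{1/2}\Bigl\|\sum_{i=1}^n u_{(m_1,\dots,m_i)}\Bigr\| \;\ge\; \Bigl|\sum_{i=1}^n P_N\bigl(e_{m_i}\otimes(s_{m_i}-s_{m_{i-1}})\bigr)(f_i^n)\Bigr| \;=\; \Bigl|\sum_{i=1}^n \sum_{j=m_{i-1}+1}^{m_i} p_N(m_i,j)\Bigr|,
\]
using $g_{m_i}^N(f_i^n)$ — wait, here is exactly where care is needed: the Rademacher biorthogonality $g^n_i(f^n_i)=1$ from the excerpt is set up with matching upper index $n$, whereas $P_N$ produces the functionals $g^N_{m_i}$. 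I would resolve this either by choosing the Rademacher system in the node definition to be $g^N_{m_i}$-compatible (i.e. use $f$-vectors living in $\ell_\infty^{2^N}$ and reindex by $m_i$), or — cleaner — by inserting, for each branch, a renormalization $I_N\colon c_0\to\ell_2$ only on the support $\{m_1,\dots,m_n\}$ so that $\|I_N\|=n^{1/2}$ rather than $N^{1/2}$; this is the point that recovers the correct power $n^{1/2}$ instead of $N^{1/2}$, which is essential since $N$ can be arbitrarily larger than $n$. After this bookkeeping, the double sum telescopes to $\sum_{i=1}^n\sum_{1\le l\le m_i - m_{i-1}} \frac1l \ge \sum_{i=1}^n \log(m_i - m_{i-1})$; since $m_i - m_{i-1}\ge 1$ and $\sum(m_i-m_{i-1}) = m_n \ge n$, concavity of $\log$ forces $\sum_{i=1}^n\log(m_i-m_{i-1})\ge$ ... no, that is minimized when all increments are $1$, giving $0$ — so this naive telescoping is \emph{not} the right functional.

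\textbf{The real heart of the matter}, and the step I expect to be the main obstacle, is choosing, for each branch, the correct permuted Hilbert matrix so that the pairing reproduces the $\frac12 n\log n$ lower bound of Lemma \ref{main2} regardless of which labels $m_1<\dots<m_n$ appear. The fix is to define $u_{t\smallfrown(m)}$ using $e_m\otimes e_m\otimes f_i^n$ (the diagonal $e_m\otimes e_m$, not the summing basis) and to pair the branch against $\sum_{i} P_N(\,\cdot\,)(f_i^n)$ where $P_N$ is induced by a matrix $q$ whose $(m_i,m_i)$ pattern, after the permutation sending $m_i\mapsto i$, becomes the Hilbert matrix $h_n$ of Lemma \ref{main2}; equivalently, one pulls back $h_n$ along the increasing bijection $\{m_1,\dots,m_n\}\to\{1,\dots,n\}$. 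Then the pairing equals exactly $\sum_{i,j\le n, i+j\ne n+1} \frac{1}{n+1-i-j}\cdot[\text{summing-basis coefficient}]$, reproducing $\ge \frac12 n\log n$, while the operator norm bound is still $\le \tau n^{1/2}$ because $h_n$ (hence any row-permutation, hence the restriction to an $n$-element support composed with $I_n\colon c_0\to\ell_2$ of norm $n^{1/2}$) has $\ell_2\to\ell_2$ norm $\le\tau$. This gives $\|\sum_{i=1}^n u_{(m_1,\dots,m_i)}\| \ge \frac{1}{2\tau} n^{1/2}\log n$ uniformly over branches, so with $\Delta = \frac{1}{2\tau}$ (adjusted for the node normalization constant) the proposition follows, and combined with Proposition \ref{DK} it yields Theorem \ref{mmm}. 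I would present the argument with the diagonal-$e_m\otimes e_m$ nodes (or $e_m\otimes s_m$ with the matrix chosen to exploit the summing structure as in Lemma \ref{main2}) precisely so that the branch-independence is transparent, flagging the choice of functional as the one genuinely new idea beyond Lemma \ref{main2}.
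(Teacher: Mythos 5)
Your proposal never settles on a single construction, and the one you ultimately put forward as ``the real heart of the matter'' does not work. With diagonal nodes $u_{t\smallfrown(m)}=e_m\otimes e_m\otimes f_i^n$, pairing a branch against any functional induced by a matrix $q$ (pulled back from $h_n$ or not) sees only the diagonal entries $q(m_i,m_i)=h_n(i,i)=\tfrac{1}{n+1-2i}$: there is no ``summing-basis coefficient'' to sum over $j$, so the pairing is $O(\log n)$ rather than of order $n\log n$, and after dividing by the functional norm $\tau n^{1/2}$ the resulting lower bound is vacuous. Your first construction, $e_m\otimes(s_m-s_{m_{i-1}})\otimes f_i^n$, you correctly abandon yourself, since $\sum_i\log(m_i-m_{i-1})$ can be $0$. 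Only the alternative you mention in passing in the final parenthesis --- nodes $e_m\otimes s_m\otimes f_i^n$ paired against the operator $Q$ induced by the matrix supported on $\{m_1,\ldots,m_n\}^2$ with $q(m_i,m_j)=p_n(i,j)$ and values in $\mathrm{span}\{g_i^n\}$ --- actually closes the argument: then $Q(e_{m_i}\otimes s_{m_i})=\sum_{j\leq i}p_n(i,j)g_i^n$ exactly as in the proof of Lemma \ref{main2}, and $\|Q\|\leq\tau n^{1/2}$ because the coordinate restriction $y\mapsto(y_{m_j})_{j=1}^n$ maps $c_0$ into $\ell_2^n$ with norm $n^{1/2}$ independently of how large $m_n$ is (which also disposes of your $N^{1/2}$ versus $n^{1/2}$ worry). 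As written, however, the proposal is not a proof; it is three candidate proofs, of which the emphasized one fails.

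You also missed the simplification that makes the paper's proof three lines long. There is no need for the second or third tensor factor to move along the branch at all: the paper takes $u_{(m_1,\ldots,m_i)}=e_{m_i}\otimes s_i\otimes f_i^n$, with $s_i$ and $f_i^n$ indexed by the \emph{level} $i$ and only the first factor carrying the label $m_i$. Weak nullness is then immediate, since for fixed $t$ of length $l$ the sequence $(e_m\otimes s_{l+1}\otimes f_{l+1}^n)_m$ spans an isometric copy of the $c_0$ basis; and no new functional is needed for the lower bound, because a permutation of the unit vector basis of the first $c_0$ factor induces an isometry of $\widehat{\otimes}_\pi^3 c_0$ carrying $\sum_{i=1}^n e_i\otimes s_i\otimes f_i^n$ to $\sum_{i=1}^n e_{m_i}\otimes s_i\otimes f_i^n$, so Lemma \ref{main2} applies verbatim to every branch. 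Your pullback-of-the-Hilbert-matrix device is, in effect, a hand-made substitute for this symmetry; it can be made to work, but it does by hand what the $1$-symmetry of the $c_0$ basis gives for free. (Incidentally, your blanket claim that $x_m\otimes y_m\to 0$ weakly whenever $x_m\to 0$ weakly and $(y_m)$ is bounded is false in general projective tensor products --- consider $e_m\otimes e_m$ in $\ell_2\widehat{\otimes}_\pi\ell_2$; here it would hold only because every bounded operator from $c_0$ to $\ell_1$ is compact, and in the paper's construction the question never arises.)
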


\begin{proof} Let $\Delta$ be the constant from Lemma \ref{main2}. Fix $n\in\nn$.  For $(m_1, \ldots, m_i)\in \mathcal{A}_n$, let $u_t=e_{m_i}\otimes s_i \otimes f_i^n\in S_{\widehat{\otimes}_\pi^3 c_0}$.  By $1$-symmetry of the unit basis of $c_0,$ for any $(m_1, \ldots, m_n)\in \mathcal{A}_n$, \begin{align*} \Bigl\|\sum_{i=1}^n u_{(m_1, \ldots, m_i)}\Bigr\|_{\widehat{\otimes}_\pi^3 c_0} & =\Bigl\|\sum_{i=1}^n e_{m_i}\otimes s_i\otimes f_i^n\Bigr\|_{\widehat{\otimes}_\pi^3 c_0}  = \Bigl\|\sum_{i=1}^n e_i\otimes s_i\otimes f_i^n\Bigr\|_{\widehat{\otimes}_\pi^3 c_0} \\ & \geqslant \Delta n^{1/2}\log(n). \end{align*}
It remains to show that $(u_t)_{t\in \mathcal{A}_m}$ is weakly null. For this, fix $t\in \{\varnothing\}\cup \mathcal{A}_{n-1}$ and let $0\leq l$ be the length of $t$. Then for each $t<m$, $u_{t\smallfrown (m)}= e_m\otimes s_{l+1}\otimes f_{l+1}^n$. Since $l$ does not depend on $m$, and since $\|s_{l+1}\|_{c_0}=\|f^n_{l+1}\|_{c_0}=1$, it follows that
 $$(u_{t\smallfrown (m)})_{t<m}=(e_m\otimes s_{l+1}\otimes f^n_{l+1})_{t<m}^\infty$$ is isometrically equivalent to the canonical $c_0$ basis in $\widehat{\otimes}_\pi^3 c_0$, and therefore a weakly null sequence. 
\end{proof}

\end{document}